\documentclass[12pt,oneside,reqno]{amsart}

\usepackage{amsmath,amscd,amsthm,amsfonts,amssymb}
\usepackage{mathtools,mathrsfs}
\mathtoolsset{showonlyrefs}
\usepackage{comment}
\usepackage{todonotes}
\usepackage{bm}
\usepackage{bbm}
\usepackage{url}
\usepackage{esint}
\usepackage{dsfont}
\usepackage{relsize}
\usepackage{enumitem}
\usepackage{stmaryrd}
\usepackage{graphicx}

\usepackage{enumitem}
\setlist[enumerate,1]{label=(\arabic*), ref=(\arabic*)}

\usepackage{accents}

\usepackage{color,xcolor}

\usepackage{geometry}
\usepackage{hyperref}

\numberwithin{equation}{section}
\newtheorem{theorem}{Theorem}[section]
\newtheorem{corollary}[theorem]{Corollary}
\newtheorem{proposition}[theorem]{Proposition}
\newtheorem{lemma}[theorem]{Lemma}
\newtheorem{definition}[theorem]{Definition}

\newcommand{\R}{\mathbb{R}}

\newcommand{\N}{\mathbb{N}}

\newcommand{\p}{\partial}

\newcommand{\abs}[1]{\left\lvert#1\right\rvert}
\newcommand{\aabs}[1]{\left\lVert#1\right\rVert}
\newcommand{\eps}{\varepsilon}

\newcommand{\dis}[1]{\text{dis}#1}

\newcommand{\boundaryquasimetric}{\delta}
\newcommand{\quasiparameter}{\alpha}

\usepackage[framemethod=tikz]{mdframed}
\usepackage{url}

\newmdenv[backgroundcolor=blue!10,innerlinewidth=0.5pt,roundcorner=4pt,linecolor=aliceblue,innerleftmargin=6pt,innerrightmargin=6pt,innertopmargin=6pt,innerbottommargin=6pt]{mybox}

\newcommand{\dummy}{\,{\cdot}\,}

\title[Finsler travel time data]{Stable recovery of a simple irreversible Finsler geometry from travel time data}

\author[de Hoop]{Maarten V. de Hoop}
\address{M. V. de Hoop, Simons Chair in Computational and Applied Mathematics and Earth Science, Rice University, Houston, TX 77005, USA}
\email{mdehoop@rice.edu}

\author[Ilmavirta]{Joonas Ilmavirta}
\address{J. Ilmavirta, Department of Mathematics and Statistics, University of Jyv\"askyl\"a, Jyv\"askyl\"a, 40014,  Finland} \email{joonas.ilmavirta@jyu.fi}

\author[Kykkänen]{Antti Kykkänen}
\address{A. Kykkänen, Computational appliead mathematics and operations research
\\
Rice University
\\ 
TX 77005, USA}
\email{ak272@rice.edu }

\author[Saksala]{Teemu Saksala}
\address{T. Saksala, Department of Mathematics\\
North Carolina State University, Raleigh\\ 
NC 27695, USA}
\email{tssaksal@ncsu.edu}

\subjclass[2010]{32C22, 53C24, 53C23, 86A22, 57N35}

\date{\today}

\begin{document}

\begin{abstract}
We show that a simple irreversible Finsler geometry can be recovered uniquely and Lipschitz-stably from its travel time data.
We introduce and use a version of Gromov--Hausdorff distance adapted to irreversible metric spaces.
In contrast to reversible (e.g. Riemannian) geometry, even the question of stability becomes ill-defined without simplicity.
\end{abstract}

\maketitle

\section{Introduction}

\subsection{Main results}

We show the uniqueness and stability of determining a smooth irreversible Finsler function from the travel time data corresponding to the arrival times of waves in a class of Finsler spaces. 
The propagation of waves in a medium can often be described in terms of a Hamiltonian flow. If the medium moves and the Hamiltonian is convex, the integral curves of the flow are (lifts of) geodesics of an irreversible Finsler function. Irreversibility means that a geodesics with reversed parametrization is not necessarily a geodesic or, equivalently, that $d_F(x,y) \neq d_F(y,x)$. The Finslerian distance function $d_F$ is therefore a \emph{quasimetric}, not a metric. 
A famous example of irreversible geometry arising from flowing medium is the Zermelo navigation problem: Given a vehicle moving in a medium with a current or wind, find a path that minimizes travel time between two points? In other words Zermelo's problem asks to find geodesics of a {Randers metric}.

We define the \emph{travel time map} of a smooth compact Finsler manifold $(M,F)$ with boundary $\partial M$ to be the map $r \colon M \to C(\partial M)$ taking a point $x$ to its \emph{travel time function} $r(x) = r_x$ defined by $r_x(z) = d_F(x,z)$ for all $z \in \partial M$. Since the Finsler function $F$ is not assumed to be symmetric, this is the distance form interior points to boundary points. The \emph{travel time data} $r(M)$ of $(M,F)$ is the range of the travel time map. We set out to study the inverse problem of stably recovering an irreversible Finsler function $F$ from the associated travel time data $r(M)$. For reasons that will be evident in a moment we restrict our attention to so called \emph{weakly simple} Finsler manifolds.

\begin{definition}
\label{def:simple_manifold}
Let $(M,F)$ be a smooth compact Finsler manifold with smooth boundary.
We say that $(M,F)$ is \emph{weakly simple} if (1) any two points are connected by a unique geodesic, (2) every maximal geodesic has two endpoints on $\partial M$ and minimizes the distance between these points, (3) every maximal geodesic meets the boundary at exactly two points.
\end{definition}

A simple Riemannian manifold as defined in~\cite[Definition 3.8.1]{paternain2023geometric} satisfies the definition above, but Definition~\ref{def:simple_manifold} is not strong enough to prove simplicity when the Finsler metric is Riemannian.
That is why we have chosen the term ``weak simplicity''. Definition~\ref{def:simple_manifold} implies that the manifold is non-trapping and has no interior conjugate points. Also, the
definition implies that the boundary is convex but not necessarily strictly convex~\cite[Corollary 1.2]{bartolo2011convex}.

Since $d_F$ is a quasi-metric on $M$ we can consider both the \emph{forward and backward metric balls}: 
\begin{equation}
\label{eqn:for-back-balls}
B_{r,+}(p)
:=\{x\in M: \: d_F(p,x)<r \}
\quad \text{and} \quad 
B_{r,-}(p)
:=\{x\in M: \: d_F(x,p)<r \}
\end{equation}
for $p \in M$ and $r > 0$.
These give rise the forward and backward metric topologies of $N$ which actually coincide with the manifold topology of $M$~\cite[Section 6.2C]{bao2012introduction}.
For completeness, we outline in Lemma~\ref{lma:quasi-reversibility} that the Finsler distance function is \emph{$\quasiparameter$-quasireversible} for some $\quasiparameter \in (0,1]$ in the sense that $d_F(x,y) \geq \quasiparameter d_F(y,x)$ for all $x,y \in M$.
Therefore the \emph{reversibility constant}
\begin{equation}
\label{eqn:reversibility-constant}
\quasiparameter_F
=
\inf_{x \neq y}
\frac{d_F(x,y)}{d_F(y,x)} \in (0,1]
\end{equation}
of $F$
is a well defined.
We have $\quasiparameter_F = 1$ if and only if $F$ is reversible. In general, we say that a quasimetric space $(Z,d)$ is \emph{$\quasiparameter$-quasireversible} if $d(p,q) \geq \quasiparameter d(q,p)$ for some uniform $\quasiparameter \in (0,1]$.

The main contribution of this paper is to prove  that the travel time data determines the irreversible Finsler manifold Lipschitz stably. To state the stability result we need to introduce the non-symmetric versions of Hausdorff and Gromov--Hausdorff distances for compact quasimetric spaces. We use this to measure the distance between two compact irreversible Finsler manifolds.

\begin{definition}
\label{def:hausdorff-gh-distance}
Let $\alpha \in (0,1]$.
For two compact subsets $A$ and $B$ of a $\quasiparameter$-quasireversible quasimetric space $Z$, their \emph{$\alpha$-Hausdorff distance} is given by
\begin{equation}
d_H^{Z,\alpha}(A,B)
=
\max
\left\{
\Gamma(A,B)
,
\alpha
\Gamma(B,A)
\right\}
\quad
\text{ where }
\Gamma(A,B)
=
\adjustlimits
\sup_{a \in A}
\inf_{b \in B}
d_Z(a,b).
\end{equation}

For two compact $\quasiparameter$-quasireversible quasimetric spaces $X$ and $Y$ their \emph{$\quasiparameter$-Gromov--Hausdorff} distance is defined by
\begin{equation}
d_{GH}^\quasiparameter(X,Y)
=
\inf_Z
d_H^{Z,\alpha}(X,Y)
\end{equation}
where the infimum is taken over all compact $\quasiparameter$-quasireversible quasimetric spaces $Z$ into which $X$ and $Y$ can be isometrically embedded.
\end{definition}

We show in Lemma~\ref{lma:hausdorff-quasimetric-space} that $d_H^Z$ is a $\quasiparameter$-quasireversible quasimetric in the space of compact sets of $Z$, while in Proposition~\ref{prop:gh-quasimetric} we prove that $d^\quasiparameter_{GH}$ defines an $\quasiparameter$-quasireversible quasimetric in the space of isometry classes of compact  $\quasiparameter$-quasi-reversible quasimetric spaces.

When $M$ is equipped with two weakly simple Finsler metrics we want to introduce a distance between the respective travel time data sets. With this in mind we first define a function $\boundaryquasimetric \colon C(\partial M) \times C(\partial M) 
\to
\R$
by
\begin{equation}
\boundaryquasimetric(f,h) 
=
\max_{\partial M} (f - h)
.
\end{equation}
For each $\quasiparameter\in (0,1]$ we define the $\quasiparameter$-quasisymmetrized version of this function:
\begin{equation}
\label{eqn:delta-alpha}
\boundaryquasimetric_\quasiparameter(f,h)
=
\max\{\boundaryquasimetric(f,h),\quasiparameter\boundaryquasimetric(h,f)\}
.
\end{equation}
This is a relaxed version of the supremum norm $\boundaryquasimetric_1$.

Since we are considering irreversible Finsler functions, we are forced to use a quasimetric on $C(\partial M)$.
If we were to have an isometry 
\\
$\phi\colon (M,F) \to (C(\p M), \aabs{\dummy}_{\infty})$, which exists for instance when $F$ is a weakly simple Riemannian metric \cite[Proposition 6]{ilmavirta2023three}, then $d_F$ would inherit symmetry from the norm. 
Therefore no symmetric norm on $C(\p M)$ is possible, and we cannot afford to fully symmetrize $\boundaryquasimetric_\quasiparameter$ by choosing $\quasiparameter=1$ unless the Finsler manifold $(M,F)$ is reversible to begin with.

The function $\boundaryquasimetric$ is neither symmetric nor necessarily positive, but $\boundaryquasimetric_\quasiparameter$ is a quasimetric (see Lemma~\ref{lem:delta_alpha_is_quasimetric}).
If $(M,F)$ is weakly simple, then $\boundaryquasimetric = \boundaryquasimetric_\quasiparameter$ on $r(M)$ for all small $\quasiparameter$ (see Lemma~\ref{lma:quasimetric-isometry}). 
If $(M,F)$ is not weakly simple we do not necessarily have $\boundaryquasimetric = \boundaryquasimetric_\quasiparameter$ for any positive $\quasiparameter$, and $\boundaryquasimetric$ can fail to be a quasimetric on $r(M)$.\footnote{Take a smooth radial wave speed $c(\abs{x})$ in the closed unit disk $\bar D$ so that $c(r)=1$ for $r>1/9$, $c(r)=1/100$ for $r<1/10$ and $c(r)$ takes values in between
in the annulus between these regions.
With the conformally Euclidean Riemannian metric $g=c^{-2}e$ it is a simple exercise to see that $\boundaryquasimetric((\frac19,0),(0,0))<0$.
Therefore $\boundaryquasimetric$ is not a quasimetric on $r_{g}(\bar D)$ and $\boundaryquasimetric = \boundaryquasimetric_\quasiparameter$ fails for every positive $\quasiparameter$.
A straight line through the origin is a geodesic between two antipodal points on $\partial D$ but not a minimizing one, so $(\bar D,g)$ is not weakly simple.}
Therefore without simplicity the travel time data set $r(M)\subset C(\partial M)$ appears to have no natural metric structure compatible with stability, which undermines our ability to even ask about stability of recovery.

In our main result we show that the travel time data determines the Finsler manifold Lipschitz stably in a class of weakly simple Finsler manifolds whose reversibility constants have a uniform lower bound.

\begin{theorem}
\label{thm:stability}
Let $\quasiparameter_0 \in (0,1]$ and let $M$ be a smooth compact manifold with smooth boundary. Then
\begin{equation}
d_{GH}^{\quasiparameter_0}((M,d_{F_1}),(M,d_{F_2}))
\leq
d_H^{\boundaryquasimetric_{\quasiparameter_0},\quasiparameter_0}(r_1(M),r_2(M))
\end{equation}
for all weakly simple Finsler metrics on $M$ whose reversibility constants satisfy $\quasiparameter_{F_1},\quasiparameter_{F_2} \geq \quasiparameter_0$. 
\end{theorem}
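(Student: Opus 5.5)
The plan is to realize both Finsler manifolds isometrically inside a single compact $\quasiparameter_0$-quasireversible quasimetric space, namely $Z = (C(\partial M), \boundaryquasimetric_{\quasiparameter_0})$, or more precisely the compact subset $Z = r_1(M)\cup r_2(M)$ with the restricted quasimetric. Once this embedding is in place, the definition of $d_{GH}^{\quasiparameter_0}$ as an infimum over all such ambient spaces gives the stated inequality at once: the Hausdorff distance computed in this particular $Z$ is one admissible competitor.

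\textbf{Step 1: $Z$ is an admissible ambient space.} By Lemma~\ref{lem:delta_alpha_is_quasimetric}, $\boundaryquasimetric_{\quasiparameter_0}$ is a quasimetric on $C(\partial M)$. It is $\quasiparameter_0$-quasireversible directly from \eqref{eqn:delta-alpha}. Indeed, $\boundaryquasimetric_{\quasiparameter_0}(f,h)\ge \quasiparameter_0\boundaryquasimetric(h,f)$, and $\boundaryquasimetric_{\quasiparameter_0}(f,h)\ge \quasiparameter_0\boundaryquasimetric_{\quasiparameter_0}(f,h)\ge \quasiparameter_0\cdot \quasiparameter_0 \boundaryquasimetric(f,h)$. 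Hence $\boundaryquasimetric_{\quasiparameter_0}(f,h)\ge \quasiparameter_0 \max\{\boundaryquasimetric(h,f),\quasiparameter_0\boundaryquasimetric(f,h)\}=\quasiparameter_0\boundaryquasimetric_{\quasiparameter_0}(h,f)$. Compactness of $r_i(M)$ follows because $r_i$ is continuous for the sup norm, being $1$-Lipschitz in the appropriate one-sided sense via the triangle inequality for $d_{F_i}$, and $M$ is compact. Since $\boundaryquasimetric_{\quasiparameter_0}$ is controlled by the sup norm, its topology is the one in which $r_i(M)$ is compact.

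\textbf{Step 2: $r_i$ is an isometric embedding $(M,d_{F_i})\to Z$.} This is where Lemma~\ref{lma:quasimetric-isometry} comes in. For a weakly simple $(M,F)$ we need $\boundaryquasimetric(r_x,r_y)=d_F(x,y)$. The inequality $\le$ is the triangle inequality: $d_F(x,z)-d_F(y,z)\le d_F(x,y)$. For equality, extend the unique geodesic from $x$ through $y$ to its exit point $z\in\partial M$, which is possible by weak simplicity. That geodesic minimizes, so $d_F(x,z)=d_F(x,y)+d_F(y,z)$.

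It remains to show $\boundaryquasimetric=\boundaryquasimetric_{\quasiparameter_0}$ on $r_i(M)$. This holds because $\quasiparameter_0\boundaryquasimetric(r_y,r_x)=\quasiparameter_0 d_{F_i}(y,x)\le \quasiparameter_{F_i}d_{F_i}(y,x)\le d_{F_i}(x,y)$, using the hypothesis $\quasiparameter_{F_i}\ge\quasiparameter_0$ and \eqref{eqn:reversibility-constant}. Injectivity follows from positivity of $d_{F_i}$.

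\textbf{Step 3: conclude.} With both spaces embedded isometrically in $Z$, we get $d_{GH}^{\quasiparameter_0}\le d_H^{Z,\quasiparameter_0}(r_1(M),r_2(M))$, which is exactly the right-hand side of the theorem. The main obstacle is Step 2, in particular the point that $\boundaryquasimetric$ equals the quasisymmetrized $\boundaryquasimetric_{\quasiparameter_0}$ on the image. This requires both the geodesic-extension argument, which needs weak simplicity conditions (1) and (2), and the uniform lower bound on the reversibility constants. It is presumably the content of Lemma~\ref{lma:quasimetric-isometry}, which I would invoke with $\quasiparameter=\quasiparameter_0$.
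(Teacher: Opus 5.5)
Your proposal is correct and follows the paper's proof: you use Lemma~\ref{lma:quasimetric-isometry} with $\quasiparameter=\quasiparameter_0\le\quasiparameter_{F_1},\quasiparameter_{F_2}$ to embed each $(M,d_{F_i})$ isometrically via $r_i$ into the common space $(C(\partial M),\boundaryquasimetric_{\quasiparameter_0})$, and then take that ambient space as one competitor in the infimum defining $d_{GH}^{\quasiparameter_0}$. Your restriction to the compact set $r_1(M)\cup r_2(M)$ is a small but genuine improvement over the paper, which takes all of $C(\partial M)$ even though the definition of $d_{GH}^{\quasiparameter}$ requires a compact ambient space; the restriction does not change the Hausdorff distance between $r_1(M)$ and $r_2(M)$.
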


We record two corollaries to Theorem~\ref{thm:stability}. The first one is a symmetrized stability result.

\begin{corollary}
\label{cor:stability-symm}
Let $\quasiparameter_0 \in (0,1]$ and let $M$ be a smooth compact manifold with smooth boundary. Then
\begin{equation}
\begin{split}
\max
\{
d_{GH}^{\quasiparameter_0}&((M,d_{F_1}),(M,d_{F_2})),
d_{GH}^{\quasiparameter_0}((M,d_{F_2}),(M,d_{F_1}))
\}
\\
&\leq
\frac{1}{\quasiparameter_0}
\min
\left\{
d_H^{\boundaryquasimetric_{\quasiparameter_0},\alpha_0}(r_1(M),r_2(M)),
d_H^{\boundaryquasimetric_{\quasiparameter_0},\alpha_0}(r_2(M),r_1(M))
\right\}
\end{split}
\end{equation}
for all weakly simple Finsler metrics on $M$ whose reversibility constants satisfy $\quasiparameter_{F_1},\quasiparameter_{F_2} \geq \quasiparameter_0$.
\end{corollary}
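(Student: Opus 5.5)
The corollary should follow from Theorem~\ref{thm:stability} and one elementary fact: any $\quasiparameter$-quasireversible quasimetric $d$ satisfies $d(q,p)\le \quasiparameter^{-1}d(p,q)$.

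First, recall that $d_{GH}^{\quasiparameter_0}$ is an $\quasiparameter_0$-quasireversible quasimetric on isometry classes of compact $\quasiparameter_0$-quasireversible quasimetric spaces (Proposition~\ref{prop:gh-quasimetric}). The spaces $(M,d_{F_i})$ belong to this class because $\quasiparameter_{F_i}\ge\quasiparameter_0$. Likewise, $\boundaryquasimetric_{\quasiparameter_0}$ is an $\quasiparameter_0$-quasireversible quasimetric on $C(\partial M)$ by Lemma~\ref{lem:delta_alpha_is_quasimetric}. This is immediate from the definition~\eqref{eqn:delta-alpha}, since $\boundaryquasimetric_{\quasiparameter_0}(f,h)\ge \quasiparameter_0\boundaryquasimetric(h,f)$ and $\boundaryquasimetric_{\quasiparameter_0}(f,h)\ge\quasiparameter_0\cdot\quasiparameter_0\boundaryquasimetric(h,f)$ combine to give $\boundaryquasimetric_{\quasiparameter_0}(f,h)\ge\quasiparameter_0\boundaryquasimetric_{\quasiparameter_0}(h,f)$. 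Hence, by Lemma~\ref{lma:hausdorff-quasimetric-space}, $d_H^{\boundaryquasimetric_{\quasiparameter_0},\quasiparameter_0}$ is $\quasiparameter_0$-quasireversible on compact subsets of $C(\partial M)$. The sets $r_i(M)$ are compact because $r_i$ is continuous (indeed Lipschitz) on the compact set $M$. To be safe I would also check the quasireversibility of the Hausdorff distance directly: from $d_H(A,B)\ge\quasiparameter_0\Gamma(B,A)$ and $d_H(A,B)\ge\Gamma(A,B)\ge\quasiparameter_0\cdot\quasiparameter_0\Gamma(A,B)$ we get $d_H(A,B)\ge\quasiparameter_0 d_H(B,A)$.

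Next, write $G_{12},G_{21}$ for the two Gromov--Hausdorff distances and $H_{12},H_{21}$ for the two Hausdorff distances of the travel time data. Theorem~\ref{thm:stability}, applied to the pair $(F_1,F_2)$ and to the swapped pair $(F_2,F_1)$, gives $G_{12}\le H_{12}$ and $G_{21}\le H_{21}$. Quasireversibility gives $G_{21}\le \quasiparameter_0^{-1}G_{12}$, $G_{12}\le\quasiparameter_0^{-1}G_{21}$, $H_{12}\le\quasiparameter_0^{-1}H_{21}$ and $H_{21}\le\quasiparameter_0^{-1}H_{12}$. Therefore
\begin{equation}
G_{12}\le H_{12}\le \quasiparameter_0^{-1}H_{12},\qquad G_{12}\le \quasiparameter_0^{-1}G_{21}\le\quasiparameter_0^{-1}H_{21},
\end{equation}
and symmetrically $G_{21}\le \quasiparameter_0^{-1}\min\{H_{12},H_{21}\}$. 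Taking the maximum over the two left-hand sides yields the claim.

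There is no real obstacle here. The only points needing care are that the reversibility constant of each quasimetric involved is at least $\quasiparameter_0$, so that the $\quasiparameter_0^{-1}$ factor is legitimate, and that Theorem~\ref{thm:stability} may be applied in both orders, which holds because its hypotheses are symmetric in $F_1,F_2$.
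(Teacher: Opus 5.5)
Your argument is correct and is essentially the paper's proof: apply Theorem~\ref{thm:stability} to both orderings of $(F_1,F_2)$ and then use the $\quasiparameter_0$-quasireversibility of $d_H^{\boundaryquasimetric_{\quasiparameter_0},\quasiparameter_0}$ from Lemma~\ref{lma:hausdorff-quasisymmetric}; your step $G_{12}\le\quasiparameter_0^{-1}G_{21}$ via Proposition~\ref{prop:gh-quasimetric} is valid but not needed, since $G_{12}\le H_{12}\le\quasiparameter_0^{-1}H_{21}$ already gives it. One small slip in your aside on $\boundaryquasimetric_{\quasiparameter_0}$, which the argument does not use: the second inequality should read $\boundaryquasimetric_{\quasiparameter_0}(f,h)\ge\quasiparameter_0^2\boundaryquasimetric(f,h)$, and this holds because $\boundaryquasimetric_{\quasiparameter_0}\ge 0$ and $\boundaryquasimetric_{\quasiparameter_0}(f,h)\ge\boundaryquasimetric(f,h)$.
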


In our second corollary to Theorem~\ref{thm:stability} we verify that the travel time data determines a weakly simple irreversible Finsler function up to a boundary fixing Finsler isometry.
Corollary~\ref{cor:uniqueness} below is contained in~\cite[Theorem 1.3]{dehoop2020determination}, but we reprove it here using a version of the Myers--Steenrod theorem; the techniques of~\cite{dehoop2020determination} were very different.

\begin{corollary}
\label{cor:uniqueness}
Let $F_1$ and $F_2$ be two weakly simple Finsler metrics on $M$ such that $r_1(M) = r_2(M)$. Then there is a Finsler isometry $\phi \colon (M,F_1) \to (M,F_2)$ that is an identity on $\p M$.
\end{corollary}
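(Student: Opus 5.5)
Since $r_1(M)=r_2(M)$, the right-hand side of Theorem~\ref{thm:stability} vanishes. Choose $\quasiparameter_0=\min\{\quasiparameter_{F_1},\quasiparameter_{F_2}\}$; this is positive by Lemma~\ref{lma:quasi-reversibility}. Then $d_{GH}^{\quasiparameter_0}((M,d_{F_1}),(M,d_{F_2}))=0$. Rather than pass through a Gromov--Hausdorff rigidity statement, I will build the isometry directly from the travel time maps, as in the proof of the theorem.

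\textbf{Step 1: $r_i$ is an isometric embedding.} By Lemma~\ref{lma:quasimetric-isometry}, each travel time map $r_i\colon (M,d_{F_i})\to (C(\p M),\boundaryquasimetric)$ is isometric, meaning $\boundaryquasimetric(r_i(x),r_i(y))=d_{F_i}(x,y)$. The inequality ``$\le$'' is the triangle inequality $d(x,z)\le d(x,y)+d(y,z)$. For equality, extend the unique geodesic from $x$ to $y$ until it exits at a point $z\in\p M$; weak simplicity says this geodesic minimizes, so $d(x,z)=d(x,y)+d(y,z)$. Consequently $r_i$ is injective, and it is continuous because $d_{F_i}$ is.

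\textbf{Step 2: define $\phi$.} Set $\phi=r_2^{-1}\circ r_1\colon M\to M$. This is a bijection, and it is distance preserving: $d_{F_2}(\phi x,\phi y)=d_{F_1}(x,y)$. Since the forward and backward metric topologies agree with the manifold topology, $\phi$ is a homeomorphism.

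\textbf{Step 3: $\phi$ fixes the boundary.} For $z\in\p M$, the function $r_i(z)$ is the unique element of $r_i(M)$ that vanishes at $z$. Indeed, $r_x(z)=0$ forces $x=z$. Hence $r_1(z)=r_2(z)$, and so $\phi(z)=z$.

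\textbf{Step 4: regularity (the main obstacle).} We must upgrade a distance-preserving bijection between Finsler quasimetrics to a smooth map with $F_2(d\phi\,v)=F_1(v)$. I would apply a Finsler version of the Myers--Steenrod theorem, presumably available later in the paper, in the form due to Deng--Hou for possibly irreversible Finsler metrics, first in the interior of $M$. The route is as follows.
\begin{enumerate}
\item The map $\phi$ sends minimizing geodesics to minimizing geodesics, since these are characterized by equality in the triangle inequality.
\item Consequently $\phi$ acts on exponential coordinates by a map that is positively homogeneous and preserves $F$. The relation $F_i(v)=\lim_{t\to0^+} d(x,\exp_x tv)/t$ recovers $F$ from the distance function.
\item A map that preserves $F$ in this way is linear, hence smooth in exponential charts.
\end{enumerate}
The irreversibility requires care in item 2: we may only use forward limits $t\to0^+$, and the exponential map is only $C^1$ at the zero section.

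Smoothness up to $\p M$ then follows by one of two arguments. The first is to extend $F_i$ slightly beyond the boundary, keeping the extensions weakly simple near $\p M$. The second is to note that $\phi$ is the identity on $\p M$ and use the fact that $\phi$ maps geodesics issuing from boundary points to geodesics, so that its derivatives extend continuously. I expect this boundary regularity to be the most delicate point.
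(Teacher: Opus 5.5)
Your Steps 1--3 are correct, and building $\phi=r_2^{-1}\circ r_1$ directly is tidier than the paper's route. The paper obtains an abstract bijective isometry from Proposition~\ref{prop:gh-quasimetric}\ref{item:vanishing-gh} and then uses $r_1(z)=r_2(\phi(z))$. That identity is only justified for your particular $\phi$: an abstract isometry could be composed with a self-isometry of $(M,F_2)$ that moves $\partial M$. Your choice also gives $\phi(\mathrm{int}\,M)=\mathrm{int}\,M$ for free.

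The interior regularity is fine as a citation of a Finsler Myers--Steenrod theorem (the paper uses Matveev--Troyanov), but do not rely on your sketch. Item 3 is false as stated, since a positively homogeneous $F$-preserving map of a tangent space need not be linear. One also needs it to preserve $F_x(w-v)=\lim_{t\to0^+}d(\exp_x tv,\exp_x tw)/t$, and strict convexity to see that it maps segments to segments.

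The genuine gap is smoothness of $\phi$ up to $\partial M$, which you leave open.
Your first suggestion fails. Extending $F_1,F_2$ beyond $\partial M$ does not extend $\phi$, which is only defined on $M$, and arbitrary extensions have no reason to be isometric. Producing compatible extensions would require $\phi$ to be smooth at $\partial M$ already.
Your second suggestion is too vague. Knowing that $\phi$ sends each geodesic from $z$ to \emph{some} geodesic from $z$ says nothing about how initial directions are transformed, and that is precisely the differential at $z$ you are trying to control.

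The missing idea is to use geodesics pinned down by metric data, namely boundary normal geodesics.
\begin{itemize}
\item For $i=1,2$ let $\Psi_i(z,t)=\gamma^i_z(t)$ for $z\in\partial M$, $t\in[0,\varepsilon)$, where $\gamma^i_z$ is the unit speed $F_i$-geodesic leaving $z$ in the inward $F_i$-normal direction.
\item For small $\varepsilon$ each $\Psi_i$ is a diffeomorphism onto a collar of $\partial M$, smooth up to the boundary even for irreversible $F_i$. This is what the paper invokes via \cite[Lemma 3.3]{dehoop2020determination}.
\item For $x$ with $\min_{w\in\partial M}d_{F_i}(w,x)<\varepsilon$, one has $x=\Psi_i(z,t)$ exactly when $t=\min_{w\in\partial M}d_{F_i}(w,x)$ and $z$ is the unique minimizer.
\item Because $\phi$ fixes $\partial M$ pointwise and preserves distances, $d_{F_2}(w,\phi(x))=d_{F_1}(w,x)$ for every $w\in\partial M$. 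Hence $\phi(\Psi_1(z,t))=\Psi_2(z,t)$, i.e.\ $\phi=\Psi_2\circ\Psi_1^{-1}$ near $\partial M$.
\end{itemize}
So $\phi$, and symmetrically $\phi^{-1}=r_1^{-1}\circ r_2$, is smooth up to the boundary, and $F_1=\phi^*F_2$ extends from the interior by continuity. With this step added your proof is complete.
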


Lastly, we list the result that the metric structure we use to compare the similarity of the irreversible compact Finsler manifolds is indeed well behaved.

\begin{proposition}
\label{prop:gh-quasimetric}
For any $\quasiparameter \in (0,1]$ Gromov--Hausdorff distance $d^\quasiparameter_{GH}$, as in Definition~\ref{def:hausdorff-gh-distance}, defines a finite $\quasiparameter$-quasireversible quasimetric in the space of isometry classes of compact $\quasiparameter$-quasireversible quasimetric spaces. That is,
\begin{enumerate}
    \item \label{item:non-neg} $d^\quasiparameter_{GH}$ is non-negative,
    \item \label{item:triangle-ineq} $d^\quasiparameter_{GH}$ satisfies the triangle inequality,
    \item \label{item:vanishing-gh} $d^\quasiparameter_{GH}(X,Y) = 0$ if and only if there is a bijective isometry between the $\quasiparameter$-quasireversible quasimetric spaces $X$ and $Y$, 
    \item \label{item:quasireversible} $d^{\quasiparameter}_{GH}(X,Y) \geq \quasiparameter d^{\quasiparameter}_{GH}(Y,X)$ for any compact $\quasiparameter$-quasireversible quasimetric spaces $X$ and $Y$.
\end{enumerate}
\end{proposition}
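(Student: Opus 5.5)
Non-negativity (1) is immediate: for any admissible $Z$, $d_H^{Z,\quasiparameter}(X,Y)\ge\Gamma(X,Y)\ge 0$, because $d_Z\ge 0$. The same observation gives (4) for each fixed $Z$. We have
\begin{equation}
d_H^{Z,\quasiparameter}(X,Y)\ge \quasiparameter\Gamma(Y,X)
\quad\text{and}\quad
d_H^{Z,\quasiparameter}(X,Y)\ge \Gamma(X,Y)\ge \quasiparameter\,\quasiparameter\Gamma(X,Y),
\end{equation}
so $d_H^{Z,\quasiparameter}(X,Y)\ge\quasiparameter\, d_H^{Z,\quasiparameter}(Y,X)$. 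This is Lemma~\ref{lma:hausdorff-quasimetric-space}. Since the class of admissible $Z$ is symmetric in $X$ and $Y$, taking the infimum over $Z$ gives (4).

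Finiteness needs an explicit common space. Take $Z=X\sqcup Y$, set $d(x,y)=d(y,x)=D$ for $x\in X$, $y\in Y$, with $D\ge\max(\operatorname{diam}X,\operatorname{diam}Y)$, and keep the original quasimetrics inside $X$ and inside $Y$. The triangle inequality then holds, and so does $\quasiparameter$-quasireversibility: cross pairs are symmetric, and pairs inside one space inherit the property.

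For (2) I would glue. Given $\eps>0$, choose $Z_1\supset X,Y$ and $Z_2\supset Y,Z$ that are $\eps$-optimal. Form $W=Z_1\sqcup_Y Z_2$ with
\begin{equation}
d_W(a,b)=\inf_{y\in Y}\bigl(d_{Z_1}(a,y)+d_{Z_2}(y,b)\bigr)
\quad\text{for } a\in Z_1,\ b\in Z_2,
\end{equation}
and analogously in the other direction. One has to check that this is a quasimetric, that both $Z_i$ embed isometrically, and that $W$ is compact. The key point is $\quasiparameter$-quasireversibility of $W$: $d_{Z_1}(a,y)\ge\quasiparameter d_{Z_1}(y,a)$, and similarly in $Z_2$, so the infimum formula inherits the constant. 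Positivity for distinct points is also fine: by compactness of $Y$ the infimum is attained, so it vanishes only when $a=y=b$, that is, for identified points. Then the Hausdorff triangle inequality in $W$ does the rest. The estimate $\Gamma(X,Z)\le\Gamma(X,Y)+\Gamma(Y,Z)$ is standard, and applying it to both terms of the max gives
\begin{equation}
d_H^{W,\quasiparameter}(X,Z)\le d_H^{W,\quasiparameter}(X,Y)+d_H^{W,\quasiparameter}(Y,Z)\le d^\quasiparameter_{GH}(X,Y)+d^\quasiparameter_{GH}(Y,Z)+2\eps.
\end{equation}

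For (3), the "if" direction is clear: if $\phi\colon X\to Y$ is a bijective isometry, take $Z=Y$ with $X$ embedded via $\phi$. The "only if" direction is the main obstacle. I would mimic the metric argument. Suppose $d^\quasiparameter_{GH}(X,Y)=0$. For each $n$ pick $Z_n$ with $d_H^{Z_n,\quasiparameter}(X,Y)<1/n$, so $\Gamma(X,Y)<1/n$ and $\Gamma(Y,X)<1/(\quasiparameter n)$. This yields maps $f_n\colon X\to Y$ with $d_{Z_n}(x,f_n(x))<1/n$. By quasireversibility, $d_{Z_n}(f_n(x),x)<1/(\quasiparameter n)$, and the triangle inequality then gives
\begin{equation}
\abs{d_Y(f_n x,f_n x')-d_X(x,x')}\le C/n
\end{equation}
for a constant $C$ depending only on $\quasiparameter$. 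Next take a countable dense set in $X$. Dense sets exist because the forward topology of a compact quasireversible space is comparable to that of the symmetrized metric, so $X$ is separable. A diagonal subsequence on this dense set, using compactness of $Y$, converges to a distance-preserving map $f$ on the dense set, and $f$ extends to all of $X$ by uniform continuity in the symmetrized metric. Symmetrically one obtains a distance-preserving $g\colon Y\to X$. Then $g\circ f$ is a distance-preserving self-map of a compact space, hence surjective; the standard argument applies to the symmetrized metric $\max(d(p,q),d(q,p))$, which $g\circ f$ also preserves. The same holds for $f\circ g$, so $f$ is a bijective isometry. The delicate points are the topology and compactness notions in the quasimetric setting, which I would handle throughout via the symmetrized metric, equivalent up to the factor $\quasiparameter^{-1}$.
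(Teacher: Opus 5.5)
Your proposal is correct and follows the paper's overall structure. Non-negativity and item (4) come from the per-$Z$ inequality $d_H^{Z,\alpha}(X,Y)\ge\alpha\, d_H^{Z,\alpha}(Y,X)$ followed by the infimum. The triangle inequality comes from gluing two near-optimal ambient spaces along the common middle space with the infimum formula. The ``only if'' part of (3) is the Burago--Burago--Ivanov diagonal argument run in the symmetrized metric $d^s$.

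The differences are in (3):
\begin{itemize}
\item \textbf{Distortion bound.} You use $\alpha$-quasireversibility of the ambient $Z_n$ directly, $d_{Z_n}(f_n(x),x)\le\alpha^{-1}d_{Z_n}(x,f_n(x))$. This gives $\operatorname{dis}(f_n)\le(1+\alpha^{-1})/n$ at once. The paper instead routes the backward estimate through $g_n\circ f_n$ and the auxiliary quantity $\beta_n$.
\item \textbf{Bijectivity.} The paper proves $f_n\to\Phi$ and $g_n\to\Psi$ uniformly, then identifies $\Psi\circ\Phi=\mathrm{id}_X$ from $g_n\circ f_n\to\mathrm{id}_X$. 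You only need distance-preserving maps $f\colon X\to Y$ and $g\colon Y\to X$, plus the fact that a distance-preserving self-map of the compact metric space $(X,d^s_X)$ is surjective; $g\circ f$ and $f\circ g$ preserve $d^s$, which you correctly note.
\end{itemize}

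Your route is shorter: it needs no uniform convergence of $(f_n)$ and no limit of the compositions $g_n\circ f_n$. The cost is quoting the surjectivity theorem for compact metric spaces, whereas the paper's argument is self-contained on that point.

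You also give an explicit finiteness argument: the disjoint union with constant cross-distance $D\ge\max(\operatorname{diam}X,\operatorname{diam}Y)$. The paper asserts finiteness but does not prove it. Just take $D>0$ so that cross distances stay positive when both spaces are single points.
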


\subsection{Relation to other inverse problems}
As any inverse problems, also the geometric ones come with questions of uniqueness and stability.
Unique determination of a Riemannian manifold from boundary distance data was shown in~\cite{Katchalov2001}, and Lipschitz stability was shown in~\cite{ilmavirta2023three} under the assumption of simplicity.
Stability estimates exist also in more general geometry~\cite{katsuda2007stability}, but it is not Lipschitz.

Unique determination of a Finsler manifold from travel time data was shown in~\cite{dehoop2020determination}, but it came with natural obstruction, unlike the Riemannian version.
Namely, unique determination of the Finsler function $F\colon TM\to\R$ only holds on a subset of the tangent bundle $TM$ and outside this set one can deform $F$ to get a new Finsler function $\tilde F$ whose travel time data cocides with that of $F$ but $\tilde F$ and $F$ are not isometric.
For full uniqueness, one needs to make assumptions like fiberwise real analyticity\footnote{Riemannian manifolds are fiberwise real analytic Finsler manifolds.} (so that this subset is enough for full determination by analytic continuation on each fiber) or simplicity (so that this subset is the whole bundle).
The present paper does to~\cite{dehoop2020determination} what~\cite{ilmavirta2023three} did to~\cite{Katchalov2001}: we show Lipschitz stability under a simplicity assumption.

Stable reconstruction of a reversible Finsler manifold was studied in~\cite{flie}.
Reversibility brings about substantial technical ease, and for irreversible Finsler manifolds the whole question of stability becomes ill-behaved as
described above.

In order to treat irreversible geometry, we introduce a notion of Gromov--Haus-dorff distance adapted to such geometries.
This is analogous to the introduction of a labeled Gromov--Hausdorff distance in~\cite{de2021stable} in order to estimate Riemannian manifolds not just abstractly but in relation to the known boundary.

Finsler manifolds are less rigid than Riemannian ones, as mentioned above in relation to~\cite{dehoop2020determination}.
Similarly, boundary rigidity (unique determination of a manifold from all pairwise distances of boundary points) is known on some Riemannian manifolds~\cite{
Besson1995Entropies,
croke1990rigidity,
croke1991rigidity,
gromov1983filling,
mukhometov1981problem,
mukhometov1978problem,
Otal1990,
pestov2005two,
stefanov2016boundary,
stefanov2017local%
}
(including stability~\cite{stefanov2005boundary}) but no such rigidity is possible on a Finsler manifold due to Ivanov's counterexample~\cite{Ivanov:volume-monotonicity}.
The counterexample is based on the freedom to modify the Finsler structure on the whole tangent bundle without any regard for individual fibers. 
The recent work~\cite{ES26:hamilton-finsler} gives a rigidity result for Finsler geometries, but the gauge freedom is a certain symplectomorphism of $T^*M$, not just diffeomorphism of $M$.

Several mechanisms can make a Finsler manifold more rigid.
One option is to look at a narrower class of Finsler manifolds, as in the Randers boundary rigidity of~\cite{monkkonen2020boundary}.
Another option is to use a kind of data that in some way ties together tangent vectors belonging to the same fiber.
A Finsler manifold was reconstructed from its broken scattering relation in~\cite{de2020foliated}, and a breaking point is where the velocity of a geodesic suddenly jumps to another vector of the same tangent space.
In the present paper a similar effect is caused by, in effect, studying all geodesics emanating from a single point and repeating it for all points.
Yet another alternative is to reconstruct the Finsler geometry only along a reference geodesic as in~\cite{Finsler_Dix}.

\subsection*{Acknowledgments}
MVdH was supported by the Simons Foundation under the MATH + X program, the National Science Foundation under grant DMS-2108175, and the corporate members of the GeoMathematical Imaging Group at Rice University.
AK was supported by the corporate members of the GeoMathematical Imaging Group at Rice University.
JI was supported by the Research Council of Finland (Flagship of Advanced Mathematics for Sensing Imaging and Modelling grant 359208; Centre of Excellence of Inverse Modelling and Imaging grant 353092; and other grants 351665, 351656, 358047, 360434) and a Väisälä project grant by the Finnish Academy of Science and Letters.
TS was supported by the National Science Foundation (DMS-2510272) and the  Simons Foundation Travel Support for Mathematicians (MPS-TSM-00013291).

\section{Proofs of the main theorems}
\label{sec:proofs-of-main-theorems}

\begin{lemma}
\label{lma:quasimetric-isometry}
If $(M,F)$ is a weakly simple Finsler manifold then for all $0<\quasiparameter \leq \quasiparameter_F$ we have the equation $\boundaryquasimetric = \boundaryquasimetric_\quasiparameter$ on $r(M)$. In particular, the travel time map $r \colon M \to C(\partial M)$ is an isometric embedding in the sense that $\boundaryquasimetric(r(x),r(y)) = d(x,y)$ for all $x,y \in M$.
\end{lemma}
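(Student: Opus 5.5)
The core of the lemma is the identity $\boundaryquasimetric(r(x),r(y)) = d_F(x,y)$ for all $x,y\in M$. Once this is in hand, the equality $\boundaryquasimetric=\boundaryquasimetric_\quasiparameter$ on $r(M)$ follows directly. For $0<\quasiparameter\le\quasiparameter_F$ we have
\begin{equation}
\quasiparameter\,\boundaryquasimetric(r(y),r(x))=\quasiparameter\, d_F(y,x)\le \quasiparameter_F\, d_F(y,x)\le d_F(x,y)=\boundaryquasimetric(r(x),r(y)),
\end{equation}
using $d_F(y,x)\ge 0$ and the definition \eqref{eqn:reversibility-constant} of $\quasiparameter_F$ (the case $x=y$ is trivial). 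Hence the maximum in \eqref{eqn:delta-alpha} is attained by the first entry. So the plan is to prove the isometry identity first.

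For the upper bound $\boundaryquasimetric(r(x),r(y))\le d_F(x,y)$, fix $z\in\p M$ and apply the triangle inequality for the quasimetric $d_F$: $d_F(x,z)\le d_F(x,y)+d_F(y,z)$. This gives $r_x(z)-r_y(z)\le d_F(x,y)$, and taking the maximum over $z\in\p M$ gives the claim. This step holds on any compact Finsler manifold and does not use simplicity.

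For the lower bound I will use weak simplicity to produce a boundary point $z$ with $d_F(x,z)=d_F(x,y)+d_F(y,z)$. If $x=y$, take any $z$. Otherwise let $\gamma$ be the unique geodesic from $x$ to $y$ given by condition (1) of Definition~\ref{def:simple_manifold}, and extend it forward to a maximal geodesic. By condition (2) this maximal geodesic ends at some $z\in\p M$ and minimizes the distance between its endpoints. Any subsegment of a minimizing geodesic is again minimizing, so the segment from $x$ to $z$ through $y$ has length $d_F(x,z)$. That length also equals $d_F(x,y)+d_F(y,z)$, because the subsegments $x\to y$ and $y\to z$ are themselves minimizing. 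Therefore $r_x(z)-r_y(z)=d_F(x,y)$, and the maximum over $\p M$ is attained. Note that only forward extension is needed here, which matches the direction of the travel time functions, since they measure distance from interior points to the boundary.

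The main obstacle is the extension step. I need the forward extension of $\gamma$ beyond $y$ to actually reach the boundary, and the point $y$ may itself lie on $\p M$, in which case one simply takes $z=y$. Otherwise I will rely on the non-trapping and endpoint statements in condition (2), applied to the maximal geodesic containing $\gamma$, and be careful about geodesics that are tangent to a non-strictly convex boundary, where condition (3) should rule out degenerate behaviour. I also need the subsegment-minimization fact for irreversible Finsler lengths. It follows from additivity of length along curves together with the triangle inequality, exactly as in the Riemannian case, and reversibility is not used.
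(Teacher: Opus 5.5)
Your proposal is correct and follows essentially the same route as the paper. The triangle inequality gives $\boundaryquasimetric(r_x,r_y)\le d_F(x,y)$, extending the geodesic from $x$ through $y$ to a boundary endpoint and using minimality from weak simplicity shows this bound is attained, and then $\quasiparameter\, d_F(y,x)\le d_F(x,y)$ gives $\boundaryquasimetric=\boundaryquasimetric_\quasiparameter$. The only cosmetic difference is that the paper also evaluates at the backward endpoint $x_{\mathrm{in}}$ to compute $\min_{\p M}(r_x-r_y)=-d_F(y,x)$ explicitly, whereas you get $\boundaryquasimetric(r_y,r_x)=d_F(y,x)$ by swapping the roles of $x$ and $y$, which is equivalent.
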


\begin{proof}
Suppose that $x \neq y$. By the triangle inequality all $x,y \in M$ and $z \in \partial M$ satisfy
\begin{equation}
\label{eqn:triangle-ineq-1}
r_x(z) - r_y(z) = d(x,z) - d(y,z) \leq [d(x,y) + d(y,z)] - d(y,z) \leq d(x,y)
\end{equation}
and
\begin{equation}
\label{eqn:triangle-ineq-2}
r_x(z) - r_y(z) = d(x,z) - d(y,z) \geq d(x,z) - [d(y,x) + d(x,z)] \geq -d(y,x).
\end{equation}
We will next show that these bounds are optimal.

Let $\gamma_{xy}$ be the unique geodesic going from $x$ to $y$, extended maximally into $\gamma_{x_\mathrm{in}y_\mathrm{out}}$ so that the endpoints $x_\mathrm{in}$ and $y_\mathrm{out}$ are on the boundary.
Because the geodesic $\gamma_{x_\mathrm{in}y_\mathrm{out}}$ is a distance minimizer by weak simplicity and the four points lay on it in the order $x_\mathrm{in}$, $x$, $y$, $y_\mathrm{out}$, evaluating $r_x$ and $r_y$ at these endpoints gives
\begin{equation}
\begin{split}
r_x(x_\mathrm{in}) - r_y(x_\mathrm{in}) &= -d(y,x)
\quad\text{and}\\
r_x(y_\mathrm{out}) - r_y(y_\mathrm{out}) &= d(x,y).
\end{split}
\end{equation}
Combining these with~\eqref{eqn:triangle-ineq-1} and~\eqref{eqn:triangle-ineq-2} gives
\begin{equation}
\min_{\partial M}(r_x - r_y) = -d(y,x)
\quad\text{and}\quad
\max_{\partial M}(r_x - r_y) = d(x,y)
\end{equation}
as claimed.

Due to Lemma~\ref{lma:quasi-reversibility} the quasireversibility constant $\quasiparameter_F$ exists. Let $\quasiparameter\in (0,\quasiparameter_F]$ and $x,y \in M$. 
Then by the first part of the proof we have that
\begin{equation}
\boundaryquasimetric(r_x,r_y)=d(x,y)
\geq 
\quasiparameter d(y,x)
=
\quasiparameter \boundaryquasimetric(r_y,r_x),
\end{equation}
and so 
$
\boundaryquasimetric(r_x,r_y)
=
\boundaryquasimetric_\quasiparameter (r_x,r_y)
$.
\end{proof}

\begin{lemma}[Myers--Steenrod]
\label{lma:myers-steenrod}
Let $(M_1,F_1)$ and $(M_2,F_2)$ be two smooth Finsler manifolds with boundary. If $f \colon M_1 \to M_2$ is an isometry between quasimetric spaces $(M_1,d_{F_1})$ and $(M_2,d_{F_2})$
then $f$ is a smooth map and $F_1 = f^*F_2$.
\end{lemma}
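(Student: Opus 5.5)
We plan to reduce the claim to the known Myers--Steenrod type results for Finsler manifolds without boundary, namely the theorem of Deng and Hou that a distance preserving surjection between (possibly irreversible) Finsler manifolds is a smooth Finsler isometry. We then handle the boundary separately. Throughout, $f$ is assumed to be a bijective distance-preserving map, so $d_{F_2}(f(x),f(y)) = d_{F_1}(x,y)$ for all $x,y$, and $f^{-1}$ has the same property.

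\emph{Step 1: topology.} Because the forward metric topology of $d_{F_i}$ agrees with the manifold topology, $f$ is a homeomorphism. Invariance of domain, together with the characterisation of interior points as those having a neighbourhood homeomorphic to $\R^n$, then gives $f(\operatorname{int} M_1) = \operatorname{int} M_2$ and $f(\partial M_1) = \partial M_2$.

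\emph{Step 2: the interior.} On the interior the map is not yet known to be an isometry of the intrinsic distances of the open manifolds $\operatorname{int} M_i$, since $d_{F_i}$ restricted to the interior may be realised by curves touching the boundary. It is, however, a local isometry for the intrinsic distances. Take $x\in \operatorname{int} M_1$ and a small forward/backward ball around it, relatively compact in the interior, chosen so that distances between points of a smaller ball are realised by curves staying inside the interior. This is possible because of the bound $d(p,q)\ge \quasiparameter\, d(q,p)$ and because the length of a curve leaving the ball exceeds the diameter. On such balls $d_{F_1}$ coincides with the intrinsic distance of the open manifold, and the same holds around $f(x)$. We then apply the local form of the Deng--Hou argument. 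Distance functions give smooth coordinates: choose points $p_j$ so that $d(p_j,\cdot)$, $j=1,\dots,n$, form a smooth chart near $x$, away from the $p_j$. This works because $d(p,\cdot)$ is smooth off $p$ and its cut locus, with differentials spanning for generic choices. In these charts $f$ becomes the identity map of coordinates, which gives smoothness. Then $F_2(df\,v) = F_1(v)$ follows from $F(v)=\lim_{t\to0^+} d(c(0),c(t))/t$ for any $C^1$ curve $c$ with $\dot c(0)=v$.

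\emph{Step 3: up to the boundary.} We expect this step to be the main obstacle. We extend the conclusion by continuity. Since $df$ preserves Finsler norms on the interior, the first derivatives of $f$ are bounded there in charts up to the boundary, so $f$ is Lipschitz. To get smoothness up to $\partial M_1$, we plan to use the same distance-coordinate trick with base points $p_j$ in the interior near a boundary point $x_0$. The functions $d(p_j,\cdot)$ are smooth up to the boundary near $x_0$, provided the geodesics from $p_j$ to points near $x_0$ are unique, interior near their start, and free of conjugate points. Their differentials at $x_0$ span $T^*_{x_0}M$. In these coordinates, which are valid on a one-sided neighbourhood of $x_0$, $f$ is again the coordinate identity, and so it is smooth up to the boundary. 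The identity $F_1=f^*F_2$ on boundary fibres then follows by continuity from the interior. If the smoothness of $d(p,\cdot)$ up to the boundary proves delicate, we would fall back on extending $(M_i,F_i)$ slightly across the boundary, or on citing an existing Myers--Steenrod result for manifolds with boundary.
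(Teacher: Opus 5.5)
Your route differs from the paper's, which never builds coordinates by hand. The paper applies the boundaryless Myers--Steenrod theorem of Matveev--Troyanov to $\operatorname{int}(M_1)$, and separately to $\partial M_1$ with its induced path metric (which $f$ preserves), and then glues the two smooth maps using boundary normal coordinates. Your Step~2 is a hands-on version of that theorem and works, though you leave one point implicit. Write $\Phi_1=(d_{F_1}(p_j,\cdot))_j$ and $\Phi_2=(d_{F_2}(f(p_j),\cdot))_j$. The identity $\Phi_2\circ f=\Phi_1$ gives smoothness of $f$ only if $\Phi_2$ is a chart near $f(x)$. In the interior this holds automatically:
\begin{itemize}
\item $\Phi_2$ is smooth near $f(x)$, since $f(p_j)$ is as close to $f(x)$ as $p_j$ is to $x$;
\item $d\Phi_2$ is invertible at $f(x)$, since $\Phi_2=\Phi_1\circ f^{-1}$ is $C^1$ and bi-Lipschitz.
\end{itemize}
Your localization worry is also unnecessary. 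Pushing curves inward along a collar changes their lengths arbitrarily little, so $d_{F_i}$ restricted to $\operatorname{int}(M_i)$ already is the intrinsic distance of the open manifold.

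The genuine gap is in Step~3, where you expected it. You impose admissibility conditions on the base points $p_j$ relative to $F_1$ and $x_0$. But writing $f=\Phi_2^{-1}\circ\Phi_1$ near $x_0$ also requires $d_{F_2}(f(p_j),\cdot)$ to be smooth up to $\partial M_2$ near $f(x_0)$, and nothing in your argument carries admissibility through $f$. Genericity cannot do this either, because admissible base points form an open but not dense set near a boundary point. For example, let $M$ be locally the exterior of a Euclidean disk, and let the segment from $p$ to $x_0$ cut the disk. Then minimizers from $p$ to points near $x_0$ wrap along $\partial M$, and $d(p,\cdot)$ is only $C^1$, not $C^2$, up to $\partial M$. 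So you need an $f$-invariant description of admissibility, and it must use $f(\partial M_1)=\partial M_2$. One option is the following. Take $p$ close to $x_0$ and in the interior (relative to $M$) of the set of points whose minimizing segment to $x_0$ meets $\partial M$ only at $x_0$. Such $p$ reach $x_0$ transversally along an interior geodesic, hence are admissible. This set is preserved by $f$, because $f$ maps metric segments to metric segments and boundary to boundary. Choosing the $p_j$ there with spanning differentials closes your argument, with the chart property at $f(x_0)$ again following from bi-Lipschitzness. The paper avoids this issue entirely. Since $f$ preserves distance to the boundary and nearest boundary points, in boundary normal coordinates (smooth also for irreversible $F$) it acts as $\psi_\partial$ on the boundary variable and as the identity on the normal one. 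Smoothness up to $\partial M_1$ then follows from the smoothness of $\psi_\partial$.
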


\begin{proof}
This proof is a variant of the similar argument given in~\cite[Proposition 24]{flie} for the reversible case. We present a condensed version to showcase that reversibility was in no way crucial.

By invariance of domain $f$ maps the interior to the interior and the boundary to the boundary. The interior map $\psi_{\mathrm{int}}=f|_{\mathrm{int}(M_1)} \colon \mathrm{int}(M_1) \to \mathrm{int}(M_2)$ is clearly an isometry.
The intrinsic distance on $\partial M_i$ induced by $F_i$ can be computed from $F_i$ as the induced path metric, and therefore the boundary restriction $\psi_{\partial}=f_{\partial M_1} \colon \partial M_1 \to \partial M_2$ of $f$ is an isometry.

By the Myers--Steenrod theorem for Finsler manifolds without boundary~\cite[Theorem A]{matveev2017myers}, both $\psi_{\mathrm{int}}$ and $\psi_\partial$ are smooth isometries and 
\begin{equation}
\label{eqn:interior-pullback}
F_1 = \psi_{\mathrm{int}}^*F_2
\quad\text{in}\quad
\mathrm{int}(M_1).
\end{equation}
To prove that $f$ is smooth, we need to ensure that the interior and boundary diffeomorphisms are glued together smoothly. This follows from the boundary normal coordinates being smooth near the boundary, which holds without reversibility~\cite[Lemma 3.3]{dehoop2020determination}. Therefore $f$ is smooth, and the same argument applies to $f^{-1}$ as well.

Now that $f$ is smooth, the interior pullback property~\eqref{eqn:interior-pullback} implies $F_1 = f^*F$ on all of $M_1$.
\end{proof}

\begin{proof}[Proof of Theorem~\ref{thm:stability}]
Due to Lemma~\ref{lma:quasimetric-isometry}, the travel time maps $r_i$ are isometric embeddings of $(M,F_i)$ into the quasimetric space $(C(\partial M),\boundaryquasimetric_{\quasiparameter_{F_i}})$ where $\quasiparameter_{F_i}$ is the reversibility constant of $F_i$, and $\boundaryquasimetric = \boundaryquasimetric_{\quasiparameter_{F_i}}$ on $r_i(M)$ for $i = 1,2$. These statements remain true for all parameter values $\quasiparameter \leq \quasiparameter_{F_i}$, so in particular, they are true for $\quasiparameter_0 \leq \quasiparameter_{F_1},\quasiparameter_{F_2}$. Hence, both travel time maps $r_i$ are isometric embdeddings of $(M,F_i)$ into a common quasimetric space $(C(\partial M),\boundaryquasimetric_{\quasiparameter_0})$. Since in the definition of the Gromov--Hausdorff distance we can choose $Z = (C(\partial M),\boundaryquasimetric_{\quasiparameter_0})$  we arrive at the inequality
\begin{equation*}
d_{GH}^{\quasiparameter_0}((M,d_{F_1}),(M,d_{F_2}))
\leq
d^{\boundaryquasimetric_{\quasiparameter_0},\alpha_0}_H(r_1(M),r_2(M)).
\qedhere
\end{equation*}
\end{proof}

\begin{proof}[Proof of Corollary~\ref{cor:stability-symm}]
Due to Lemma~\ref{lma:hausdorff-quasisymmetric} it holds that
\begin{equation}
d^{\delta_{\quasiparameter_0},\alpha_0}_H(r_1(M),r_2(M))
\leq
\frac{1}{\quasiparameter_0}
\min
\left\{
d^{\delta_{\quasiparameter_0},\alpha_0}_H(r_1(M),r_2(M)),
d^{\delta_{\quasiparameter_0},\alpha_0}_H(r_2(M),r_1(M))
\right\}
\end{equation}
and the same holds when the left hand-side is replaced by $d^{\delta_{\quasiparameter_0},\alpha_0}_H(r_2(M),r_1(M))$.
Thus the claim follows from Theorem~\ref{thm:stability}.
\end{proof}

\begin{proof}[Proof of Corollary~\ref{cor:uniqueness}]
Since $r_1(M) = r_2(M)$, it follows from Theorem~\ref{thm:stability} that $d^{\quasiparameter_0}_{GH}((M,d_{F_1}),(M,d_{F_2})) = 0$ for suitable $\quasiparameter_0$. Thus by Proposition~\ref{prop:gh-quasimetric} there is a map $\phi \colon M \to M$ so that $d_{F_1}(x,x') = d_{F_2}(\phi(x),\phi(x'))$. This map $\phi$ is smooth and $F_1 = \phi^*F_2$ by Lemma~\ref{lma:myers-steenrod}. In other words, $\phi$ is a Finsler isometry between $(M,F_1)$ and $(M,F_2)$.

To see that $\phi$ fixes all boundary points, let $z \in \partial M$ and $z' \coloneqq \phi(z)$.
Then $r_1(z) = r_2(z')$, so evaluating the first map at $z$ gives $0 = d_{F_2}(z',z)$ and so $z' = z$.
\end{proof}

\section{Gromov--Hausdorff distance of quasireversible quasimetric spaces}
\label{sec:rev-constant-quasimetrics}

This section establishes the basic properties of the Hausdorff and Gromov--Hausdorff distance for $\quasiparameter$-quasireversible quasimetric spaces as defined in Definition~\ref{def:hausdorff-gh-distance}. Particularly, we prove in here Propositon~\ref{prop:gh-quasimetric}. Before doing this we prove that the space of compact subsets of a $\quasiparameter$-quasireversible quasimetric space forms a $\quasiparameter$-quasireversible quasimetric space when equipped with the Hausdorff metric (see Lemma~\ref{lma:hausdorff-quasimetric-space}). 
In Lemma~\ref{lma:hausdorff-quasisymmetric} we also record an elementary estimate used in the proof of Theorem~\ref{thm:stability}.

For $\quasiparameter$-quasireversible quasimetric space we define the forward and backward topologies using the respective forward and backward balls as in~\eqref{eqn:for-back-balls}.
These topologies as well as the symmetrized\footnote{The topology induced by the distance function $d^s(p,q) \coloneqq \max\{d(p,q),d(q,p)\}$} 
topology are all bi-Lipschitz equivalent.
Hence, the compact subsets in these topologies are the same.

\begin{lemma}
\label{lma:hausdorff-quasisymmetric}
Let $\quasiparameter \in (0,1]$ and let $Z$ be a compact $\quasiparameter$-quasireversible quasimetric space. Then for all compact subsets $A,B \subset Z$ we have
\begin{enumerate}[itemsep=1.2ex]
    \item \label{item:hausdorff-quasireversible} $d^{Z,\alpha}_H(A,B) \geq \quasiparameter d^{Z,\alpha}_H(B,A)$
    \item \label{item:hausdorff-to-symm} $\quasiparameter d^{Z,\alpha}_H(A,B) \leq \min\{d^{Z,\alpha}_H(A,B),d^{Z,\alpha}_H(B,A)\}$.
\end{enumerate}
\end{lemma}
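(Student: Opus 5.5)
Item (1) reduces to comparing the two one-sided quantities $\Gamma(A,B)$ and $\Gamma(B,A)$ and then unwinding the maxima in the definition of $d^{Z,\alpha}_H$. Write $d=d_Z$, which satisfies $d(p,q)\ge \alpha\, d(q,p)$ for all $p,q\in Z$. By definition,
\begin{equation}
d^{Z,\alpha}_H(A,B)=\max\{\Gamma(A,B),\alpha\Gamma(B,A)\},
\qquad
d^{Z,\alpha}_H(B,A)=\max\{\Gamma(B,A),\alpha\Gamma(A,B)\}.
\end{equation}
We must show $\alpha$ times the second maximum is at most the first maximum, so we bound each of the two terms in turn. The first term of $\alpha d^{Z,\alpha}_H(B,A)$ is $\alpha\Gamma(B,A)$, which appears verbatim in $d^{Z,\alpha}_H(A,B)$. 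The second term is $\alpha^2\Gamma(A,B)$, and since $\alpha\le 1$ and $\Gamma(A,B)\ge 0$ (quasimetrics are nonnegative) we get $\alpha^2\Gamma(A,B)\le \Gamma(A,B)\le d^{Z,\alpha}_H(A,B)$. Taking the maximum proves (1).

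In this argument the quasireversibility of $Z$ is not even needed for (1), because the weight $\alpha$ is built into the definition. If one wanted the sharper statement without the built-in weight, one would use that compactness gives attained infima, so that for each $b\in B$ there is $a\in A$ with $d(a,b)\le$... This would not give a pointwise comparison of $\Gamma(B,A)$ with $\Gamma(A,B)$ in general, which is exactly why the weighted definition is used; we do not need it.

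Item (2) then follows directly from (1). The inequality $\alpha d^{Z,\alpha}_H(A,B)\le d^{Z,\alpha}_H(A,B)$ holds since $\alpha\le1$ and the distance is nonnegative. Applying (1) with the roles of $A$ and $B$ swapped gives $\alpha d^{Z,\alpha}_H(A,B)\le d^{Z,\alpha}_H(B,A)$. Together these give $\alpha d^{Z,\alpha}_H(A,B)\le\min\{d^{Z,\alpha}_H(A,B),d^{Z,\alpha}_H(B,A)\}$.

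No step is a real obstacle. The only point needing care is nonnegativity of $\Gamma$, together with finiteness, which follows from compactness and continuity of $d$ in the (bi-Lipschitz equivalent) topologies noted above.
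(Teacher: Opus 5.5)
Your proof is correct and is essentially the paper's argument. For (1) you bound the two terms of $\alpha\, d^{Z,\alpha}_H(B,A)$ separately using $\alpha\le 1$ and $\Gamma\ge 0$, and (2) follows from (1) with $A,B$ swapped together with $\alpha\le 1$; the unfinished digression about an unweighted version is not used and can simply be deleted.
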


\begin{proof}
Item~\ref{item:hausdorff-quasireversible} follows directly from definitions: Let $A,B \subset Z$ be compact. Then
\begin{equation}
\begin{split}
\alpha d^{Z,\alpha}_H(B,A)
&=
\max
\{
\alpha\Gamma(B,A),
\alpha^2\Gamma(A,B)
\}
\\
&\leq
\max
\{
\alpha\Gamma(B,A),
\Gamma(A,B)
\}
\\
&=
d^{Z,\alpha}_H(A,B)
\end{split}
\end{equation}
as claimed.

Item~\ref{item:hausdorff-to-symm} follows from item~\ref{item:hausdorff-quasireversible}: Indeed, since $\quasiparameter \leq 1$, we have
$
\quasiparameter d^{Z,\alpha}_H(A,B) \leq d^{Z,\alpha}_H(A,B)
$
and due to item~\ref{item:hausdorff-quasireversible}, we have
$
\quasiparameter d^{Z,\alpha}_H(A,B) \leq d^{Z,\alpha}_H(B,A).
$
\end{proof}

\begin{lemma}
\label{lma:hausdorff-quasimetric-space}
Let $\quasiparameter\in (0,1]$ and $(Z,d)$ be a compact $\quasiparameter$-quasireversible quasimetric space. Then the Hausdorff distance $d_H^{Z}$ is an $\quasiparameter$-quasireversible quasimetric in $\mathcal{C}_Z \coloneqq \{A \subset Z:A \text{ compact}\}$.
\end{lemma}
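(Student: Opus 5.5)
We check the quasimetric axioms for $d_H^{Z}=d_H^{Z,\quasiparameter}$ on $\mathcal{C}_Z$ one at a time: non-negativity, vanishing exactly on the diagonal, the triangle inequality, and $\quasiparameter$-quasireversibility. The last is already item~\ref{item:hausdorff-quasireversible} of Lemma~\ref{lma:hausdorff-quasisymmetric}, so only the first three need work. Finiteness follows from compactness. Since $d$ is continuous for the (common) topology, the function $a\mapsto \inf_{b\in B}d(a,b)$ is continuous on the compact set $A$. Its supremum is therefore attained, and $\Gamma(A,B)<\infty$.

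Non-negativity is immediate, since $d\ge 0$ gives $\Gamma\ge 0$. For the vanishing property, $d_H^Z(A,A)=0$ because we may choose $b=a$. Conversely, suppose $d_H^Z(A,B)=0$. Then $\Gamma(A,B)=0$ and, because $\quasiparameter>0$, also $\Gamma(B,A)=0$. Fix $a\in A$. Then $\inf_{b\in B}d(a,b)=0$, so there are $b_n\in B$ with $d(a,b_n)\to 0$. Quasireversibility gives $d(b_n,a)\le \quasiparameter^{-1}d(a,b_n)\to 0$, so $b_n\to a$ in the symmetrized topology. Since $B$ is compact, hence closed, in that topology (the compact sets agree for all three topologies), we get $a\in B$. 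Thus $A\subset B$, and symmetrically $B\subset A$.

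For the triangle inequality, the key step is $\Gamma(A,C)\le \Gamma(A,B)+\Gamma(B,C)$. Fix $a\in A$ and $\eps>0$. Choose $b\in B$ with $d(a,b)\le \Gamma(A,B)+\eps$, and then $c\in C$ with $d(b,c)\le \Gamma(B,C)+\eps$. The triangle inequality for $d$ then gives $\inf_{c}d(a,c)\le \Gamma(A,B)+\Gamma(B,C)+2\eps$. Applying this to both orders, we obtain
\begin{equation}
d_H^Z(A,C)\le \max\{\Gamma(A,B)+\Gamma(B,C),\ \quasiparameter\Gamma(C,B)+\quasiparameter\Gamma(B,A)\}.
\end{equation}
Each of the two entries is bounded by $d_H^Z(A,B)+d_H^Z(B,C)$. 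The first uses $\Gamma(A,B)\le d_H^Z(A,B)$ and $\Gamma(B,C)\le d_H^Z(B,C)$. The second uses $\quasiparameter\Gamma(B,A)\le d_H^Z(A,B)$ and $\quasiparameter\Gamma(C,B)\le d_H^Z(B,C)$.

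The only genuinely delicate step is the vanishing property. It requires passing from $\inf_{b\in B} d(a,b)=0$, which is a statement about forward distances only, to $a\in B$, and this is exactly where $\quasiparameter$-quasireversibility and the agreement of compact sets across the forward, backward and symmetrized topologies are used.
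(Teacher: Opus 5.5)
Your proof is correct and follows essentially the same route as the paper. Quasireversibility is quoted from Lemma~\ref{lma:hausdorff-quasisymmetric}, and the triangle inequality comes from $\Gamma(A,C)\le\Gamma(A,B)+\Gamma(B,C)$ combined with $\max\{a+b,c+d\}\le\max\{a,c\}+\max\{b,d\}$. For the vanishing property the paper argues that $x\in A\setminus B$ forces $\inf_{b\in B}d(x,b)>0$ because the infimum is attained on the compact set $B$, whereas you use a sequence $b_n\to a$ in the symmetrized metric together with closedness of $B$; this difference is cosmetic and both arguments are fine.
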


\begin{proof}
By Lemma~\ref{lma:hausdorff-quasisymmetric} $d^Z_H$ is $\quasiparameter$-quasireversible. It remains to show that $d^Z_H$ is a quasimetric.

We observe that for $A,B \in \mathcal{C}_Z$ we have $d^{Z,\alpha}_H(A,B) = 0$ if and only if $A = B$.
Clearly $d^{Z,\alpha}_H(A,A) = 0$.
If there is $x\in A\setminus B$, then by compactness $d_Z(x,B)>0$ and so $d^{Z,\alpha}_H(A,B) > 0$.
Similarly any $x\in B\setminus A$ leads to $d^{Z,\alpha}_H(A,B) > 0$.

Let us then show that $d^Z_H$ satisfies the triangle inequality. Let $A,B,C \in \mathcal{C}_Z$, 
$a \in A$ and $b \in B$. Since $d_Z$ is a quasimetric, for all $c \in C$, we have
\begin{equation}
\inf_{b \in B}
d(a,b)
\leq
d(a,b)
\leq
d(a,c)
+
d(c,b).
\end{equation}
Taking infimum over $b \in B$ in the right-hand side, we get
\begin{equation}
\begin{split}
\inf_{b \in B}
d(a,b)
&\leq
d(a,c)
+
\inf_{b \in B}
d(c,b)
\leq
d(a,c)
+
\adjustlimits
\sup_{c \in C}
\inf_{b \in B}
d(c,b).
\end{split}
\end{equation}
Since $c \in C$ is arbitrarily chosen we have that
\begin{equation}
\inf_{b \in B}
d(a,b)
\leq
\inf_{c \in C}
d(a,c)
+
\adjustlimits
\sup_{c \in C}
\inf_{b \in B}
d(c,b).
\end{equation}
Therefore,
$
\Gamma(A,B)
\leq
\Gamma(A,C)
+
\Gamma(C,B)
$
and finally,
\begin{equation}
\begin{split}
d^{Z,\alpha}_H(A,B)
&=
\max\{\Gamma(A,B),\alpha\Gamma(B,A)\}
\\
&\leq
\max\{\Gamma(A,C),\alpha\Gamma(C,A)\}
+
\max\{\Gamma(C,B),\alpha\Gamma(B,C)\}
\\
&\leq
d^{Z,\alpha}_H(A,C)
+
d^{Z,\alpha}_H(C,B).
\end{split}
\end{equation}
The proof is finished.
\end{proof}

We are ready to prove Proposition~\ref{prop:gh-quasimetric}.

\begin{proof}[Proof of Proposition~\ref{prop:gh-quasimetric}]
\ref{item:non-neg}: The non-negativity of $d^{\quasiparameter}_{GH}$ is clear from the definitions. 

\ref{item:triangle-ineq}: For the triangle inequality we follow the steps for the proof of \cite[Prop. 7.3.16]{burago2001course} concerning the triangle inequality of the regular Gromov-Hausdorff distance. 
We consider compact $\quasiparameter$-quasireversible quasimetric spaces $X$, $Y$ and $W$, and choose $\quasiparameter$-quasireversible quasimetric spaces $Z_1$ and $Z_2$ so that $X,W \subset Z_1$ and $W,Y \subset Z_2$ as isometric embeddings.
We define $Z = Z_1 \cup Z_2$ with a function $d_Z$ that agrees with $d_{Z_i}$ in $Z_i\times Z_i$, 
\begin{equation}
d_Z(p,q)
=
\inf_{w \in W}
\{
d_{Z_1}(p,w)
+
d_{Z_2}(w,q)
\},
\text{ for } p\in Z_1, q\in Z_2
\end{equation}
and
\begin{equation}
d_Z(q,p)
=
\inf_{w \in W}
\{
d_{Z_2}(q,w)
+
d_{Z_1}(w,p)
\},
\text{ for } q\in Z_2, p\in Z_1.
\end{equation}
Since $Z_1$ and $Z_2$ are $\quasiparameter$-quasireversible quasimetric spaces and $d_{Z_1}=d_{Z_2}$ in $W\times W$ it follows from a direct, but somewhat lengthy computation, that $d_Z$ is a well defined $\quasiparameter$-quasireversible quasimetric in $Z$. Thus, the  triangle inequality for $d_{GH}^\quasiparameter$ follows from Lemma~\ref{lma:hausdorff-quasimetric-space}.

\ref{item:vanishing-gh}:
If $X$ and $Y$ are isometric compact $\alpha$-quasireversible quasimetric spaces, it straightforward to see that $d^\alpha_{GH}(X,Y) = 0$ by embedding, say, $X$ into $Y$ isometrically and using Lemma~\ref{lma:hausdorff-quasimetric-space}.
Let $d^\quasiparameter_{GH}(X,Y) = 0$.
In order to show that $X$ and $Y$ are isometric we follow the steps of \cite[Theorem 7.3.30]{burago2001course}.
For each $n \in \N$ there is a compact $\quasiparameter$-quasireversible quasimetric space $Z_n$ and isometric embeddings $i_n \colon X \to Z_n$ and $j_n \colon Y \to Z_n$ such that
\begin{equation}
\eps_n
\coloneqq
d_H^{Z_n}(i_n(X),j_n(Y)) \to 0,
\quad 
\text{as } n \to \infty.
\end{equation}
Thus, by the definition of $\alpha$-Hausdorff distance (Definition~\ref{def:hausdorff-gh-distance}), we have
\begin{equation}
\begin{split}
\adjustlimits
\sup_{x \in X}
\inf_{y \in Y}
d_{Z_n}(i_n(x),j_n(y))
\leq
\eps_n
\quad
\text{and}
\quad
\adjustlimits
\sup_{y \in Y}
\inf_{x \in X}
d_{Z_n}(j_n(y),i_n(x))
\leq
\frac{\eps_n}{\quasiparameter}.
\end{split}
\end{equation}
Let us choose maps $f_n \colon X \to Y$ and $g_n \colon Y \to X$ such that
\begin{equation}
\label{eq:estimate_for_f_n_and_g_n}
\begin{split}
d_{Z_n}(i_n(x),j_n(f_n(x)))
&\leq
\eps_n + \frac{1}{n}
\eqqcolon
\eta_n
\quad
\text{and}
\\
d_{Z_n}(j_n(y),i_n(g_n(y)))
&\leq
\frac{\eps_n}{\quasiparameter} + \frac{1}{n}
\eqqcolon
\theta_n.
\end{split}
\end{equation}
Such maps can be chosen by the definition of supremum and infimum.

Using the triangle inequality we see that
\begin{equation}
\begin{split}
d_X(x,g_n(f_n(x))), 
\: d_Y(y,f_n(g_n(y)))
&\leq
\eta_n + \theta_n,
\end{split}
\end{equation}
for all $x \in X$ and $y \in Y$. Therefore,
\begin{equation}
\label{eqn:composition-convergence-1}
\begin{split}
\sup_{x \in X}
d_X(x,g_n(f_n(x)))
, \: 
\sup_{y \in Y}
d_Y(y,f_n(g_n(y)))
\to 0,
\quad \text{as } n \to \infty. 
\end{split}
\end{equation}
Due to $\quasiparameter$-quasireversibility of $X$ and $Y$ it also follows that
\begin{equation}
\label{eqn:composition-convergence-2}
\begin{split}
\beta_n
\coloneqq
\sup_{x \in X}
d_X(g_n(f_n(x)),x)
&\to 0
\quad\text{and}
\\
\gamma_n
\coloneqq
\sup_{y \in Y}
d_Y(f_n(g_n(y)),y)
&\to 0,
\quad \text{as } n \to \infty. 
\end{split}
\end{equation}

We show next that the \emph{distortion} of the map $f_n$ satisfies
\begin{equation}
\label{eqn:uniform-convergence-1}
\dis(f_n)\coloneqq
\sup_{x,x' \in X}
\abs{d_Y(f_n(x),f_n(x')) - d_X(x,x')}
\to
0.
\end{equation}

Since, $d_X(x,x')=d_{Z_n}(i_n(x),i_n(x'))$  for all $x,x' \in X$, we have due to triangle inequality, \eqref{eq:estimate_for_f_n_and_g_n} and \eqref{eqn:composition-convergence-1} that
\begin{equation}
\abs{d_Y(f_n(x),f_n(x'))-d_X(x,x')}
\leq
\eta_n + \theta_n + \beta_n.
\end{equation}
The estimate \eqref{eqn:uniform-convergence-1} follows.
Similarly, one can show that
\begin{equation}
\label{eqn:uniform-convergence-2}
\dis(g_n)\coloneqq
\sup_{y,y' \in Y}
\abs{d_X(g_n(y),g_n(y')) - d_Y(y,y')}
\to 0.
\end{equation}

Next, we show that, after passing to a subsequence, there are distance preserving functions $\Phi \colon X \to Y$ and $\Psi \colon Y \to X$ so that $f_n \to \Phi$ and $g_n \to \Psi$ uniformly.
For the purposes of showing the above, let us consider the symmetrized distance functions
\begin{equation}
\begin{split}
d^s_X(x,x')
&=
\max
\{
d_X(x,x'),
d_X(x',x)
\},
\quad\text{and}
\\
d^s_Y(y,y')
&=
\max
\{
d_Y(y,y'),
d_Y(y',y)
\}.
\end{split}
\end{equation}
The quasimetric functions $d_X$ and $d_Y$ are continuous with respect to $d^s_X$ and $d^s_Y$, respectively. It holds that
\begin{equation}
\label{eqn:distortion-estimate}
\abs{
d_Y(f_n(x),f_n(x'))
-
d_X(x,x')
}
\leq
\dis(f_n)
\quad\text{for all } x,x' \in X.
\end{equation}
Hence
\begin{equation}
\label{eqn:asymptotic-continuity}
d^s_Y(f_n(x),f_n(x'))
\leq
d_X^s(x,x')
+
\dis(f_n)
\quad\text{for all } x,x' \in X.
\end{equation}
Choose, by the separability of a compact space $(X,d^s_X)$, a dense subset $S = \{ x_k \}_{k \in \N} \subset X$. Since $(Y,d^s_Y)$ is compact we can use the diagonal argument to extract a subsequence, still denoted by $(f_n)_{n = 1}^\infty$, such that $(f_n(x_k))_{n = 1}^\infty$ converges in $Y$ for all $k \in \N$. Define $\tilde\Phi \colon S \to Y$ by setting
\begin{equation}
\label{eqn:definition-on-s}
\tilde\Phi(x_k)
=
\lim_{n \to \infty}f_n(x_k)
\end{equation}
for all $k \in \N$. It follows from~\eqref{eqn:distortion-estimate} and continuity of $d_Y$ with respect to $d^s_Y$ that
\begin{equation}
d_Y(\tilde\Phi(x_k),\tilde\Phi(x_l))
=
d_X(x_k,x_l)
\end{equation}
for all $k,l \in \N$. Furthermore, it holds that
\begin{equation}
d_Y^s(\tilde\Phi(x_k),\tilde\Phi(x_l))
=
d_X^s(x_k,x_l)
\end{equation}
for all $k,l \in \N$. Hence,  $\tilde\Phi$ is uniformly continuous. Thus, by completeness of $(X,d^s_X)$ there is a unique continuous extension $\Phi \colon X \to Y$. Using  density of $S$ and continuity of the quasimetrics with respect to the symmetrized metrics one can show that 
\begin{equation}
\label{eq:Phi_is_isometry}
d_Y(\Phi(x),\Phi(x'))
=
d_X(x,x')
\end{equation}
for all $x,x' \in X$.

We show that $f_n \to \Phi$ uniformly with respect to the symmetrized metrics. Let $r > 0$ be arbitrary. Using compactness of $(X,d_X^s)$ again, there are $x_1,\dots,x_N \in S$ such that for all $x \in X$
\begin{equation}
\min_{1 \leq i \leq N} d^s_X(x,x_i) \leq r.
\end{equation}
Let $x \in X$ and choose $x_i$ as above so that $d^s_X(x,x_i) \leq r$. Then triangle inequality,  \eqref{eqn:asymptotic-continuity}, and  \eqref{eq:Phi_is_isometry} give
\begin{equation}
\begin{split}
d^s_Y(f_n(x),\Phi(x))
&\leq
2r
+
\dis(f_n)
+
\max_{1 \leq i \leq N}
d^s_Y(f_n(x_i),\Phi(x_i)).
\end{split}
\end{equation}
By taking $n$ large enough we get from~\eqref{eqn:uniform-convergence-1} and~\eqref{eqn:definition-on-s} that the right hand-side is less than $4r$. Thus $f_n \to \Phi$ uniformly. By an analogous argument we show that there is a distance preserving map $\Psi \colon Y \to X$ so that $g_n \to \Psi$ uniformly.

Finally, we show that $\Psi$ and $\Phi$ are inverses to each other. We take $x \in X$ and use~\eqref{eqn:asymptotic-continuity}, to see that
\begin{equation}
\begin{split}
d^s_X(g_n(f_n(x),\Psi(\Phi(x)))
&\leq
d^s_Y(f_n(x),\Phi(x))
+
\dis(g_n)
+
\sup_{y \in Y}d^s_Y(g_n(y),\Psi(y)).
\end{split}
\end{equation}
Therefore, $g_n \circ f_n \to \Psi \circ \Phi$ uniformly. On the other hand, by~\eqref{eqn:composition-convergence-1} and~\eqref{eqn:composition-convergence-2} we see that $g_n \circ f_n \to id_X$ uniformly. Hence $\Psi \circ \Phi = id_X$. Similarly, one shows that $\Phi \circ \Psi = id_Y$ proving that $\Phi \colon X \to Y$ is a bijective isometry.

\ref{item:quasireversible}: This claim follows from Definition~\ref{def:hausdorff-gh-distance} and Lemma~\ref{lma:hausdorff-quasisymmetric} after embedding $X$ and $Y$ isometrically in to a common $\quasiparameter$-quasireversible quasimetric space $Z$.
\end{proof}

\appendix

\section{Quasireversiblity of Finsler manifolds}

This appendix serves a two-fold purpose; we prove that the function $\delta_{\quasiparameter}$ defined in~\eqref{eqn:delta-alpha} is a quasimetric in $C(\partial M)$ and we prove that any compact Finsler manifold is $\quasiparameter$-quasireversible for some $\quasiparameter \in (0,1]$.

\begin{lemma}
\label{lem:delta_alpha_is_quasimetric}
Let $M$ be a smooth compact manifold with boundary. For each $\quasiparameter \in (0,1]$ the function $\boundaryquasimetric_\quasiparameter$ is a $\quasiparameter$-quasireversible quasimetric in $C(\partial M)$.
\end{lemma}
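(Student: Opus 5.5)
We verify the quasimetric axioms for $\boundaryquasimetric_\quasiparameter(f,h)=\max\{\boundaryquasimetric(f,h),\quasiparameter\boundaryquasimetric(h,f)\}$ directly from the definition $\boundaryquasimetric(f,h)=\max_{\partial M}(f-h)$. All maxima exist because $\partial M$ is compact and $f,h$ are continuous. The argument runs in four short steps: non-negativity, vanishing exactly on the diagonal, the triangle inequality, and $\quasiparameter$-quasireversibility.

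\emph{Non-negativity.} Fix $f,h\in C(\partial M)$ and observe that
\begin{equation}
\boundaryquasimetric(h,f)=\max_{\partial M}(h-f)=-\min_{\partial M}(f-h).
\end{equation}
If $\boundaryquasimetric(f,h)<0$, then $f-h<0$ everywhere on $\partial M$. Hence $\min_{\partial M}(f-h)<0$, so $\boundaryquasimetric(h,f)>0$. In either case one of the two entries in the maximum defining $\boundaryquasimetric_\quasiparameter(f,h)$ is non-negative, and therefore $\boundaryquasimetric_\quasiparameter(f,h)\ge 0$.

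\emph{Vanishing.} Suppose $\boundaryquasimetric_\quasiparameter(f,h)=0$. Since $\quasiparameter>0$, both $\max_{\partial M}(f-h)\le 0$ and $\max_{\partial M}(h-f)\le 0$. This gives $f\le h$ and $h\le f$ on $\partial M$, so $f=h$. Conversely, $\boundaryquasimetric_\quasiparameter(f,f)=0$ is immediate.

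\emph{Triangle inequality.} The unsymmetrized function $\boundaryquasimetric$ is subadditive: for all $f,g,h$,
\begin{equation}
\boundaryquasimetric(f,h)=\max_{\partial M}\big((f-g)+(g-h)\big)\le\boundaryquasimetric(f,g)+\boundaryquasimetric(g,h).
\end{equation}
Applying the same bound in the reverse order gives $\boundaryquasimetric(h,f)\le \boundaryquasimetric(h,g)+\boundaryquasimetric(g,f)$. Multiplying this by $\quasiparameter$ and bounding each term by the corresponding $\boundaryquasimetric_\quasiparameter$, we get both
\begin{equation}
\boundaryquasimetric(f,h)\le\boundaryquasimetric_\quasiparameter(f,g)+\boundaryquasimetric_\quasiparameter(g,h)
\quad\text{and}\quad
\quasiparameter\boundaryquasimetric(h,f)\le\boundaryquasimetric_\quasiparameter(f,g)+\boundaryquasimetric_\quasiparameter(g,h).
\end{equation}
Taking the maximum of the two left-hand sides yields $\boundaryquasimetric_\quasiparameter(f,h)\le\boundaryquasimetric_\quasiparameter(f,g)+\boundaryquasimetric_\quasiparameter(g,h)$. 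This mirrors the computation in Lemma~\ref{lma:hausdorff-quasimetric-space}.

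\emph{Quasireversibility.} This follows the proof of Lemma~\ref{lma:hausdorff-quasisymmetric}. Since $\quasiparameter^2\le\quasiparameter\le 1$,
\begin{equation}
\quasiparameter\boundaryquasimetric_\quasiparameter(h,f)=\max\{\quasiparameter\boundaryquasimetric(h,f),\quasiparameter^2\boundaryquasimetric(f,h)\}.
\end{equation}
The first entry is already one of the entries of $\boundaryquasimetric_\quasiparameter(f,h)$. For the second, if $\boundaryquasimetric(f,h)\ge0$ then $\quasiparameter^2\boundaryquasimetric(f,h)\le\boundaryquasimetric(f,h)$. If instead $\boundaryquasimetric(f,h)<0$, then $\quasiparameter^2\boundaryquasimetric(f,h)<0\le\boundaryquasimetric_\quasiparameter(f,h)$ by non-negativity. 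In both cases $\quasiparameter\boundaryquasimetric_\quasiparameter(h,f)\le\boundaryquasimetric_\quasiparameter(f,h)$.

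The only point needing care is this sign issue: $\boundaryquasimetric$ itself can be negative. Non-negativity therefore has to be established first, since the quasireversibility step uses it.
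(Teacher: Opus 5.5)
Your proof is correct and follows essentially the same direct verification as the paper: non-negativity from the observation that $\boundaryquasimetric(f,h)<0$ forces $\boundaryquasimetric(h,f)>0$, vanishing from $f\le h\le f$, and the triangle inequality from subadditivity of $\boundaryquasimetric$. In the triangle inequality you bound both entries of the maximum at once instead of splitting into the paper's two cases, which amounts to the same argument. Your explicit quasireversibility step, which uses non-negativity to handle the case $\boundaryquasimetric(f,h)<0$, fills in a detail the paper only asserts ``follows easily from the definitions.''
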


\begin{proof}
Let $f,h,g\in C(\p M)$.
The quasireversibility estimate
\begin{equation}
\label{eq:reversibility_cost}    
\boundaryquasimetric_{\quasiparameter}(f,g)
\geq
\quasiparameter\boundaryquasimetric_{\quasiparameter}(g,f),
\end{equation}
follows easily from the definitions.

For non-negativity, it is enough to notice that $\boundaryquasimetric(f,g)\leq0$ implies $\boundaryquasimetric(g,f)\geq0$.
Also, $\boundaryquasimetric_\quasiparameter(f,g)=0$ implies that both the minimum and maximum of $f-g$ is zero, and so $f=g$.

The triangle inequality can be split in two cases.
If
$
\boundaryquasimetric(f,h)\geq \quasiparameter  \boundaryquasimetric(h,f)
$,
then   
    \begin{equation}
   \boundaryquasimetric_\quasiparameter(f,h)
   =
   \boundaryquasimetric(f,h)
   \leq 
   \boundaryquasimetric(f,g)+\boundaryquasimetric(g,h)
   \leq 
    \boundaryquasimetric_\quasiparameter(f,g)+\boundaryquasimetric_\quasiparameter(g,h).
   \end{equation}
   If instead
   $
   \quasiparameter  \boundaryquasimetric(h,f)\geq \boundaryquasimetric(f,h)
   $,
   then
   \begin{equation*}
   \boundaryquasimetric_\quasiparameter(f,h)
   =
   \quasiparameter\boundaryquasimetric(h,f)
   \leq
   \quasiparameter(\boundaryquasimetric_\quasiparameter(h,g)+\boundaryquasimetric_\quasiparameter(g,f))
   \leq \boundaryquasimetric_\quasiparameter(f,g)+\boundaryquasimetric_\quasiparameter(g,h).
   \qedhere
   \end{equation*}
\end{proof}

Lemma~\ref{lma:quasi-reversibility} shows that the reversibility constant defined in~\eqref{eqn:reversibility-constant} is positive for any compact Finsler manifold. It is clear by definition that $\quasiparameter_F \leq 1$, and that $\quasiparameter_F = 1$ if and only if the metric is reversible.

\begin{lemma}[Quasireversibility]
\label{lma:quasi-reversibility}
Let $(M,F)$ be a compact Finsler manifold. Then there is $\quasiparameter \in (0,1]$ so that the Finslerian distance function satisfies $d_F(x,y) \geq \quasiparameter d_F(y,x)$.
\end{lemma}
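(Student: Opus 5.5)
The approach is to compare $F$ with a reversible Finsler function through a fiberwise constant, and then pass this comparison to lengths of curves and hence to distances. Define the symmetrization $\tilde F(v) \coloneqq F(-v)$. This is again a continuous function on $TM$, positive on $TM\setminus 0$ and positively homogeneous of degree one. Let $SM$ be the unit sphere bundle with respect to some auxiliary smooth Riemannian metric $g$ on $M$. Since $M$ is compact, $SM$ is compact. The function $v\mapsto F(v)/F(-v)$ is continuous and strictly positive on $SM$, so it attains a positive minimum $\quasiparameter$. Because both $F(v)$ and $F(-v)$ are homogeneous of degree one, the ratio is homogeneous of degree zero. Therefore
\begin{equation}
F(v) \geq \quasiparameter F(-v) \quad \text{for all } v \in TM.
\end{equation}
Taking $v$ and $-v$ together, the minimum of the ratio on $SM$ is at most $1$, so $\quasiparameter\in(0,1]$.

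Next we turn to curves. Let $c\colon[0,1]\to M$ be any piecewise $C^1$ curve from $y$ to $x$, and let $\bar c(t)=c(1-t)$ be its reversal, which runs from $x$ to $y$. Since $\dot{\bar c}(t) = -\dot c(1-t)$, the pointwise inequality together with a change of variables gives
\begin{equation}
L_F(\bar c)=\int_0^1 F(-\dot c(1-t))\,dt \leq \quasiparameter^{-1}\int_0^1 F(\dot c(s))\,ds = \quasiparameter^{-1}L_F(c).
\end{equation}

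Finally, we take the infimum over curves. Reversal is a bijection between admissible curves from $y$ to $x$ and admissible curves from $x$ to $y$. Hence
\begin{equation}
d_F(x,y)\leq \inf_c L_F(\bar c) \leq \quasiparameter^{-1} d_F(y,x).
\end{equation}
Swapping the roles of $x$ and $y$ yields $d_F(x,y)\geq\quasiparameter d_F(y,x)$, which is the claimed estimate.

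The main point needing care is the compactness step. We must ensure that $F$ is continuous and positive on all of $TM\setminus 0$, including over $\partial M$. This holds because a Finsler function on a compact manifold with boundary is smooth up to the boundary. The other point is that the admissible class of curves used to define $d_F$ is closed under reversal. This is true for piecewise $C^1$ curves in $M$, including curves that touch the boundary. Everything else is elementary.
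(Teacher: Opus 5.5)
Your proof is correct and follows the paper's approach: the degree-zero ratio $F(v)/F(-v)$ attains a positive minimum on a compact sphere bundle, and you spell out the curve-reversal step that the paper leaves to Bao--Chern--Shen. One cosmetic slip: $v\mapsto F(-v)$ is the reverse Finsler function, not a symmetrization or a reversible metric, but your argument never relies on that description.
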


\begin{proof}
By compactness the zero-homogeneous function $f \colon TM \setminus \{0\} \to \R$ defined by
$f(x,v)
=
\frac{F(x,v)}{F(x,-v)}$
attains its minimum and maximum, and the claim follows.
For more detail, see the proof of a local version in~\cite[Lemma 6.2.1]{bao2012introduction}.
\end{proof}

\bibliographystyle{abbrv}
\bibliography{bib}

\end{document}